\newcommand{\set}[1]{\left\{#1\right\}}
\newcommand{\norm}[1]{\left\lVert#1\right\rVert}
\newcommand{\snorm}[1]{\big\lVert#1\big\rVert}
\newcommand{\ind}[1]{1\hspace{-.28em}\mathrm{I}_{#1}}
\newcommand{\abs}[1]{\left\vert#1\right\vert}
\newcommand{\aabs}[1]{\big\lvert#1\big\rvert}
\newcommand{\eps}{\varepsilon}
\newcommand{\ex}[1]{\mathsf{E}\left[#1\right]}
\newcommand{\oX}{\overline X}
\newcommand{\oZ}{\overline Z}
\newtheorem{theorem}{Theorem}[section]
\newtheorem{lemma}{Lemma}[section]
\newtheorem{proposition}{Proposition}[section]
\newtheorem{corollary}{Corollary}[section]
\theoremstyle{remark}
\newtheorem{remark}{Remark}[section]
\theoremstyle{definition}
\newcommand{\R}{\mathbb{R}}
\newcommand{\cI}{\mathcal{I}}
\begin{document}
\title{Mixed stochastic differential  equations with long-range dependence: existence, uniqueness and convergence of solutions}
\author{Yuliya Mishura}
\ead{myus@univ.kiev.ua}
\author{Georgiy Shevchenko\corref{cor1}}
\cortext[cor1]{Corresponding author}
\ead{zhora@univ.kiev.ua}
\address{Kyiv National Taras Shevchenko University, Department of Mechanics and Mathematics, Volodymyrska 64, 01601 Kyiv, Ukraine}

\begin{abstract}
For a mixed stochastic differential equation
involving standard Brownian motion and an almost
surely H\"older continuous process $Z$ with H\"older exponent $\gamma>1/2$, we establish a new result on its unique solvability.
We also establish an estimate for difference of solutions to such equations with different processes $Z$ and deduce
a corresponding limit theorem. As a by-product, we obtain a result on existence of moments of a solution to a mixed equation
under an assumption that $Z$ has certain exponential moments.
\end{abstract}
\begin{keyword}
Mixed stochastic differential equation \sep pathwise integral \sep
long-range dependence\sep fractional Brownian motion \sep stochastic differential equation with random drift \MSC[2010]{60G22;
60G15; 60H10; 26A33}%
\end{keyword}%
\maketitle

\section*{Introduction}

In this paper we study the following mixed stochastic differential equation:
 \begin{equation}\label{main} X_t =X_0 +\int_0^t a(s,X_{s})ds+
\int_0^tb(s,X_{s})dW_s+\int_0^tc(s,X_{s})dZ_s,\
t\in[0,T],\end{equation}
where  $W$ is a standard Wiener process, and $Z$ is an almost surely H\"older continuous process with H\"older exponent $\gamma>1/2$. The processes
$W$ and $Z$ can be dependent.

The motivation to consider such equations comes, in particular,  from financial mathematics. When it is necessary to model
randomness on a financial market, it is useful to distinguish between two main sources of this randomness.
The first source is the stock exchange itself with thousands of agents. The noise coming from this source can
be assumed white and is best modeled by a Wiener process.
The second source has the financial and economical background. The random noise coming from this source
usually has a long range dependence property, which can be  modeled by a fractional Brownian
motion $B^H$ with the Hurst parameter $H>1/2$ or by a multifractional Brownian motion with the Hurst function
uniformly greater than $1/2$. Most of  long-range-dependent processes have one thing in common: they are
H\"older continuous with exponent greater than $1/2$, and this is the reason to consider a rather general
equation \eqref{main}.

Equation \eqref{main} with $Z=B^H$, a  fractional Brownian motion, was first considered  in \cite{kubilius}, where existence and uniqueness
of solution was proved for time-independent coefficients and zero drift. For inhomogeneous coefficients, unique solvability
was established in \cite{mis08} for  $H\in (3/4,1)$ and bounded coefficients,  in \cite{guernual} for any $H>1/2$, but under the assumption that $W$ and $B^H$ are independent, and in \cite{missh11} for any $H>1/2$, but bounded coefficient $b$. In this paper we generalize the last
result replacing the boundedness assumption by the linear growth.

There is, however, an obstacle to use  equation \eqref{main} in applications because it is very hard to analyze with
standard tools of stochastic analysis. The main reason for this is that the two stochastic integrals in \eqref{main}
have very different nature. The integral with respect to the Wiener process is It\^o integral,
and it is best analyzed in a mean square sense, while the integral with respect
to $Z$ is understood in a pathwise sense, and all estimates are pathwise with random constants.
So there is a need for good approximations for such equations. One way to approximate is to replace
integrals by finite sums, this leads to Euler approximations. For equation \eqref{main} such approximations were
considered in \cite{mbfbm-euler}, where a sharp estimate for the rate of convergence was obtained.
Another way is to replace process $Z$ by a smooth process $\oZ$, transforming equation \eqref{main}
into a usual It\^o stochastic differential equation with random drift $a(s,x) + c(s,x) Z'_s$. Since there is a
well-developed theory for It\^o equations, such smooth approximations may prove very useful in applications.

The paper is organized as follows. In Section 1, we give basic facts about
integration with respect to fractional Brownian motion and formulate main hypotheses. In Section 2, we
establish auxiliary results. As a by-product, we obtain a result on existence of moments of a solution to a mixed equation
under an assumption that $Z$ has certain exponential moments, which is satisfied, for example, by a fractional Brownian motion
with Hurst parameter $H>3/4$. Section 3 contains the result about existence and uniqueness of solution to equation \eqref{main}.
In Section 4, we estimate a difference between two solutions of equations \eqref{main} with different processes $Z$
and deduce  a limit theorem for equation \eqref{main} from this estimate.

\section{Preliminaries}

Let ($\Omega,\mathcal{F},\set{\mathcal{F}_t}_{t\in[0,T]},P$) be a complete
probability space equipped with a filtration satisfying standard
assumptions, and  $\{W_t,
t \in [0,T]\}$   be a standard $\mathcal F_t$-Wiener process.
Let also $\{Z_t, t \in [0,T]\}$ be an $\mathcal F_t$-adapted stochastic process, which is
almost surely H\"older continuous with exponent $\gamma>1/2$.
We consider a mixed stochastic differential equation \eqref{main}.
The integral w.r.t.\ Wiener process $W$ is the standard It\^o integral, and the integral w.r.t.\ $Z$ is pathwise generalized
Lebesgue--Stieltjes integral (see \cite{NR,Zah98a}), which is defined as follows.
Consider two  continuous functions  $f$ and $g$, defined on some interval $[a,b]\subset \mathbb{R}$.
For $\alpha\in (0,1)$ define fractional derivatives
\begin{gather*}
\big(D_{a+}^{\alpha}f\big)(x)=\frac{1}{\Gamma(1-\alpha)}\bigg(\frac{f(x)}{(x-a)^\alpha}+\alpha
\int_{a}^x\frac{f(x)-f(u)}{(x-u)^{1+\alpha}}du\bigg)1_{(a,b)}(x),\\
\big(D_{b-}^{1-\alpha}g\big)(x)=\frac{e^{-i\pi
\alpha}}{\Gamma(\alpha)}\bigg(\frac{g(x)}{(b-x)^{1-\alpha}}+(1-\alpha)
\int_{x}^b\frac{g(x)-g(u)}{(u-x)^{2-\alpha}}du\bigg)1_{(a,b)}(x).
\end{gather*}

Assume that
 $D_{a+}^{\alpha}f\in L_1[a,b], \ D_{b-}^{1-\alpha}g_{b-}\in
L_\infty[a,b]$, where $g_{b-}(x) = g(x) - g(b)$.
Under these
assumptions, the generalized (fractional) Lebesgue-Stieltjes integral $\int_a^bf(x)dg(x)$ is defined as
\begin{equation}\label{general l-s}\int_a^bf(x)dg(x)=e^{i\pi\alpha}\int_a^b\big(D_{a+}^{\alpha}f\big)(x)\big(D_{b-}^{1-\alpha}g_{b-}\big)(x)dx.
\end{equation}

In view of this, we will consider the following  norms for $\alpha \in(1-H,1/2)$:
\begin{gather*}
\norm{f}^2_{2,\alpha;t} = \int_0^t \norm{f}_{\alpha;s}^2 g(t,s)ds,\\
\norm{f}_{\infty,\alpha;t} = \sup_{s\in [0,t]} \norm{f}_{\alpha;s},
\end{gather*}
where $g(t,s) = s^{-\alpha} + (t-s)^{-\alpha-1/2}$ and
 $$\norm{f}_{\alpha;t} = \abs{f(t)} + \int_0^t \abs{f(t)-f(s)}(t-s)^{-1-\alpha} ds.$$
Also define a seminorm
\begin{gather*}
\norm{f}_{0,\alpha;t} = \sup_{0\le u<v<t} \left(\frac{\abs{f(v)-f(u)}}{(v-u)^{1-\alpha}} + \int_u^v \frac{\abs{f(u)-f(z)}}{(z-u)^{2-\alpha}}dz\right).
\end{gather*}

Recall that by our assumption $Z$ is almost surely $\gamma$-H\"older continuous with $\gamma>\frac12$. Hence it is easy to see that for any $\alpha\in (1-\gamma,1/2)$
$$
\sup_{0\leq
u< v\leq t}\aabs{\big(D_{v-}^{1-\alpha}Z_{t-}\big)(u)}\le \norm{Z}_{0,t}
<\infty.
$$
Thus, we can define the integral with respect to $Z$ by \eqref{general l-s}, and it  admits  the following estimate for $0\le a<b\le t$:
\begin{equation}\label{ineq}
\abs{\int_a^b f(s)dZ_s} \le
C_{\alpha}\norm{Z}_{0,t} \int_a^b\bigg(\abs{f(s)}
(s-a)^{-\alpha}+\int_a^s \abs{f(s)-f(z)}
(s-z)^{-\alpha-1}dz\bigg)ds.
\end{equation}
for any
$\alpha\in(1-\gamma,1/2) $, $t>0$, $u\le v\le t$ and any $f$ such that the right-hand side of this inequality is finite.

We will assume that for some $K>0$ and for any 
$t,s\in[0,T]$, $x,y\in\mathbb R$, $\beta>1/2$
\begin{equation}\label{assumpt}
\begin{aligned}
&| a(t,x)|  +|b(t,x)| + |c(t,x)| \leq K(1+\abs{x}),\quad
\abs{\partial_x c(t,x)} \leq K,\\
&|a(t,x)-a(t,y)|+|b(t,x)-b(t,y)|+|\partial_x{}c(t,x)-\partial_x{}c(t,y)|\le K|x-y|,\\
&|a(s,x)-a(t,x)|+|b(s,x)-b(t,x)|+|c(s,x)-c(t,x)|+|\partial_x{}c(s,x)-\partial_x{}c(t,x)|\le{K}|s-t|^{\beta}.
\end{aligned}
\end{equation}
It was proved in \cite{missh11} that equation \eqref{main} is uniquely solvable when these assumptions
hold and if additionally $b$ is bounded: for some $K_1>0$
\begin{equation}
\label{boundb}
|b(t,x)|\le K_1.
\end{equation}
The reason for us to formulate this assumption individually is that we are going to drop this assumption.

\section{Auxiliary results}
\begin{lemma}
Let $g:[0,T]\to \R$ be a $\gamma$-H\"older continuous function. Define for $\eps>0$
$g^\eps(t) = \eps^{-1} \int_{0\vee t-\eps}^t g(s) ds$. Then for $\alpha\in(1-\gamma,1)$
$$
\norm{g-g^\eps}_{0,\alpha;T}\le C K_\gamma(g) \eps^{\gamma+\alpha-1},
$$
where $K_\gamma(g) = \sup_{0\le s<t\le T} \abs{g(t)-g(s)}/(t-s)^{\gamma}$ is the H\"older constant of $g$.
\end{lemma}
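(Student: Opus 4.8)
The plan is to exploit that $g^\eps$ is a backward moving average of $g$ over a window of length $\eps$, so that the difference $h\bydef g-g^\eps$ is itself a weighted average of increments of $g$. Concretely, for $t\ge\eps$ one has $h(t)=\eps^{-1}\int_{t-\eps}^{t}\bigl(g(t)-g(s)\bigr)\,ds$, whence the Hölder bound $\abs{g(t)-g(s)}\le K_\gamma(g)\abs{t-s}^{\gamma}$ gives the uniform estimate $\abs{h(t)}\le CK_\gamma(g)\eps^{\gamma}$. I would record a second, local estimate for increments: writing $h(v)-h(u)=\bigl(g(v)-g(u)\bigr)-\eps^{-1}\int_u^v\bigl(g(s)-g(s-\eps)\bigr)\,ds$ and bounding the two Hölder increments separately yields $\abs{h(v)-h(u)}\le K_\gamma(g)\bigl[(v-u)^{\gamma}+\eps^{\gamma-1}(v-u)\bigr]$. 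These two estimates — one good for large $v-u$, one good for small $v-u$ — are the whole engine of the proof.

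Since $\norm{h}_{0,\alpha;T}$ is the supremum over $0\le u<v<T$ of a Hölder-type quotient plus a singular integral, I would first bound the quotient $\abs{h(v)-h(u)}/(v-u)^{1-\alpha}$ by splitting on the size of $v-u$. When $v-u\le\eps$ the local estimate gives the bound $K_\gamma(g)\bigl[(v-u)^{\gamma+\alpha-1}+\eps^{\gamma-1}(v-u)^{\alpha}\bigr]$, and since $\gamma+\alpha-1>0$ (this is exactly where $\alpha>1-\gamma$ enters) both summands are at most $CK_\gamma(g)\eps^{\gamma+\alpha-1}$. When $v-u>\eps$ the uniform estimate together with $(v-u)^{1-\alpha}>\eps^{1-\alpha}$ gives $\abs{h(v)-h(u)}/(v-u)^{1-\alpha}\le CK_\gamma(g)\eps^{\gamma}\eps^{\alpha-1}=CK_\gamma(g)\eps^{\gamma+\alpha-1}$, the same order.

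For the singular integral $\int_u^v \abs{h(u)-h(z)}(z-u)^{-2+\alpha}\,dz$ I would split the domain at $z=u+\eps$. On $(u,u+\eps)$ I insert the local estimate and reduce to $\int_0^\eps\bigl(w^{\gamma+\alpha-2}+\eps^{\gamma-1}w^{\alpha-1}\bigr)\,dw$; both integrals converge precisely because $\gamma+\alpha-1>0$ and $\alpha>0$, and each evaluates to a constant times $\eps^{\gamma+\alpha-1}$. On $(u+\eps,v)$ I insert the uniform bound $\abs{h(u)-h(z)}\le CK_\gamma(g)\eps^{\gamma}$ and use $\int_{u+\eps}^{\infty}(z-u)^{\alpha-2}\,dz=\eps^{\alpha-1}/(1-\alpha)$, again of order $\eps^{\gamma+\alpha-1}$. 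Combining the quotient and integral bounds over all admissible $u,v$ gives $\norm{h}_{0,\alpha;T}\le CK_\gamma(g)\eps^{\gamma+\alpha-1}$, with $C$ depending only on $\alpha$ and $\gamma$ through the elementary integrals above.

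The step I expect to be most delicate is the near-diagonal part of the singular integral: a crude uniform bound on $\abs{h(u)-h(z)}$ is useless there, since it produces a divergent $\int(z-u)^{-2+\alpha}\,dz$, so one genuinely needs the local increment estimate together with the sharp exponent bookkeeping that uses $\alpha\in(1-\gamma,1)$ at both ends. The one place where the convention at the left endpoint matters is the boundary layer $t<\eps$, where the window is clamped; with the averaging window understood to carry full mass $\eps$ (equivalently, extending $g$ by $g(0)$ below $0$), the identity $h(t)=\eps^{-1}\int_{t-\eps}^{t}\bigl(g(t)-g(s)\bigr)\,ds$ and hence all of the above estimates remain valid uniformly, so no separate argument is needed and the constant is unaffected.
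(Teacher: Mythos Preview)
Your argument is correct and follows essentially the same route as the paper: both establish the increment bound $\abs{h(t)-h(s)}\le CK_\gamma(g)\bigl(\eps\wedge\abs{t-s}\bigr)^{\gamma}$ (the paper states it in this combined form; your pair $(v-u)^{\gamma}+\eps^{\gamma-1}(v-u)$ is equivalent for $v-u\le\eps$), then bound the quotient and the singular integral separately, each by splitting at scale $\eps$. The paper handles the boundary layer by reducing to $g(0)=0$ and extending $g$ by zero, which is exactly the device you describe.
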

\begin{proof}Without loss of generality assume that $g(0)=0$.
To simplify the notation, assume that $g(x)=0$ for $x< 0$.
Take any $t,s\in[0,T]$. For $\abs{t-s}\ge \eps$
\begin{gather*}
\abs{g(t)-g^\eps(t)-g(s)+g^\eps(s)} = \eps^{-1}\abs{\int_{t-\eps}^t \big(g(t)-g(u)\big)du - \int_{s-\eps}^s \big(g(s)-g(v)\big)dv}\\
\le K_\gamma(g)\eps^{-1}  \abs{\int_{t-\eps}^t (t-u)^\gamma du} +\abs{\int_{s-\eps}^s (s-v)^\gamma dv}
\le C K_\gamma(g) \eps^\gamma;
\end{gather*}
for $\abs{t-s}<\eps$
\begin{gather*}
\abs{g(t)-g^\eps(t)-g(s)+g^\eps(s)} \le \abs{g(t)-g(s)} +
 \eps^{-1}\abs{\int_{-\eps}^0 \big(g(t+u)-g(s+u)\big)du}
\le C K_\gamma(g)\abs{t-s}^{\gamma},
\end{gather*}
consequently
\begin{equation}
\abs{g(t)-g^\eps(t)-g(s)+g^\eps(s)}\le C K_\gamma(g)\big(\eps\wedge\abs{t-s}\big)^{\gamma}.
\end{equation}
Now write
\begin{gather*}
\norm{g-g^{\eps}}_{0,\alpha;T} \le A^\eps+B^\eps,
\end{gather*}
where
\begin{gather*}
A^\eps = \sup_{0\le u<v\le T} \frac{\abs{g(u)-g^\eps(u)-g(v)+g^\eps(v)}}{(v-u)^{1-\alpha}} \\
\le C K_\gamma(g) \sup_{0\le u<v\le T} (v-u)^{\alpha-1}\big(\eps\wedge\abs{v-u}\big)^{\gamma}
\le C K_\gamma(g)\eps^{\gamma+\alpha-1},\\
B^\eps  = \sup_{0\le u<v\le T}\abs{\int_u^v \frac{g(u)-g^\eps(u)-g(x)+g^\eps(x)}{(x-u)^{2-\alpha}}dx}\\
\le C K_\gamma(g)\sup_{0\le u<v\le T}\int_u^v \frac{\big((x-u)\wedge \eps\big)^{\gamma}}{(x-u)^{2-\alpha}}dx
\\
 \le C K_\gamma(g)\sup_{0\le u<v\le T} \big((v-u)\wedge \eps\big)^{\gamma+\alpha-1}\le C K_\gamma(g)\eps^{\gamma+\alpha-1}.
\end{gather*}
\end{proof}

\begin{corollary}
Let $g:[0,T]\to \R$ be a $\gamma$-H\"older continuous function with $g(0)=0$. There exists a sequence of continuously
differentiable functions $\{g^n,n\ge 1\}$ such that for any $\alpha\in(1-\gamma,1)$ \ $\norm{g-g_n}_{0,\alpha;T}\to 0$,
$n\to\infty$. One possible choice of such sequence is $g_n(t) = a_n^{-1} \int_{0\vee t-a_n}^t g(s) ds$, where $a_n\downarrow 0$, $n\to\infty$.
\end{corollary}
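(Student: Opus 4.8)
The plan is to invoke the preceding Lemma with the choice $\eps=a_n$ and then check the claimed smoothness; almost nothing else is needed. Setting $g_n=g^{a_n}$, the Lemma immediately yields
$$
\norm{g-g_n}_{0,\alpha;T}\le C K_\gamma(g)\, a_n^{\gamma+\alpha-1}.
$$
Since $\alpha\in(1-\gamma,1)$ forces $\gamma+\alpha-1>0$, and $a_n\downarrow 0$ as $n\to\infty$, the right-hand side tends to $0$; this is the entire content of the convergence assertion. Thus all the analytic work is already packaged in the Lemma, and the only remaining task is to confirm that each $g_n$ really is continuously differentiable on $[0,T]$.

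To that end I would, exactly as in the Lemma's proof, extend $g$ by $g(x)=0$ for $x<0$, so that $g_n(t)=a_n^{-1}\int_{t-a_n}^t g(s)\,ds$ for all $t\in[0,T]$ (the lower limit $0\vee(t-a_n)$ coincides with $t-a_n$ once $g$ is extended by zero). Because $g$ is continuous, the fundamental theorem of calculus gives $g_n'(t)=a_n^{-1}\big(g(t)-g(t-a_n)\big)$, with $g(t-a_n)$ read as $0$ when $t<a_n$.

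The single point requiring care — and the reason the hypothesis $g(0)=0$ is imposed — is the junction $t=a_n$, where the effective lower limit switches from $0$ to $t-a_n$. There the one-sided derivatives are $a_n^{-1}g(a_n)$ from the left and $a_n^{-1}\big(g(a_n)-g(0)\big)$ from the right, and these coincide precisely because $g(0)=0$; equivalently, the zero-extension of $g$ is continuous at the origin, so $g_n'$ is continuous on all of $[0,T]$ and hence $g_n\in C^1$. I do not expect any genuine obstacle here: the convergence is handed to us directly by the Lemma, and the smoothness is a routine fundamental-theorem-of-calculus verification whose only subtlety is the matching of the one-sided derivatives at $t=a_n$, which the assumption $g(0)=0$ guarantees.
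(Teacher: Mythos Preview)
Your proposal is correct and matches the paper's intent: the corollary is stated without proof, precisely because it follows immediately from the preceding Lemma with $\eps=a_n$, together with the routine observation that the moving average of a continuous function (extended by zero, which is continuous thanks to $g(0)=0$) is $C^1$. Your verification of the $C^1$ property, including the care at $t=a_n$, is exactly the small detail the paper leaves to the reader.
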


Further throughout the paper there will be no ambiguity
about $\alpha$, so for the sake of shortness we will usually abbreviate $\norm{f}_t = \norm{f}_{\alpha;t}$ and $\norm{f}_{x,t}=\norm{f}_{x,\alpha;t}$, where $x\in\set{0,2,\infty}$.

\begin{lemma}\label{lemma-preaprior} Under assumptions \eqref{assumpt} and \eqref{boundb}
\begin{gather*}
\norm{X}_t\le C \norm{Z}_{0,t} \bigg(1 + \int_0^t \norm{X}_{s}  \big(s^{-\alpha} + (t-s)^{-2\alpha}\big) ds \bigg) + I_b(t),
\end{gather*}
where
$I_b(t) = \norm{\int_0^{\cdot} b(s,X_s) dW_s}_{t}$.
\end{lemma}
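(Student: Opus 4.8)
The plan is to decompose the solution via the integral equation \eqref{main} and exploit the sub-additivity of the seminorm $\norm{\cdot}_t$. Writing $A_t=\int_0^t a(s,X_s)\,ds$, $M_t=\int_0^t b(s,X_s)\,dW_s$ and $N_t=\int_0^t c(s,X_s)\,dZ_s$, the triangle inequality for $\norm{\cdot}_t$ (which holds for both its pointwise part $\abs{\cdot}$ and its increment part $\int_0^t\abs{\cdot_t-\cdot_s}(t-s)^{-1-\alpha}ds$) gives $\norm{X}_t\le\abs{X_0}+\norm{A}_t+\norm{M}_t+\norm{N}_t$. Since $\norm{M}_t=I_b(t)$ by definition, this term is simply carried through (assumption \eqref{boundb} serving only to guarantee $I_b(t)<\infty$), and it remains to bound the drift part $\norm{A}_t$ and the pathwise part $\norm{N}_t$.

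For the drift I would use the linear growth $\abs{a(s,x)}\le K(1+\abs{x})$ from \eqref{assumpt}. The pointwise piece is at most $K\int_0^t(1+\abs{X_s})\,ds$, and for the increment piece I would insert $\abs{A_t-A_s}\le K\int_s^t(1+\abs{X_u})\,du$, interchange the order of integration, and use $\int_0^u(t-s)^{-1-\alpha}\,ds\le \alpha^{-1}(t-u)^{-\alpha}$ to obtain a bound of the form $C\int_0^t(1+\abs{X_u})(t-u)^{-\alpha}\,du$. On $[0,T]$ one has $(t-u)^{-\alpha}\le(t-u)^{-2\alpha}$ and $1\le T^{\alpha}s^{-\alpha}$, so both weights are dominated by $s^{-\alpha}+(t-s)^{-2\alpha}$ and the drift contributes a term of the asserted shape (with a constant not carrying the $\norm{Z}_{0,t}$ prefactor, which is harmless when combined below).

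The main work is the pathwise term $\norm{N}_t$, handled through inequality \eqref{ineq} with $f(s)=c(s,X_s)$. Two ingredients are needed: first, the growth and regularity of $c(\cdot,X_\cdot)$, namely $\abs{c(s,X_s)}\le K(1+\abs{X_s})$ and, splitting through $c(s,X_z)$ and invoking $\abs{\partial_x c}\le K$ together with the temporal Hölder bound in \eqref{assumpt}, $\abs{c(s,X_s)-c(z,X_z)}\le K\big(\abs{X_s-X_z}+\abs{s-z}^{\beta}\big)$; second, the observation that $\int_0^s\abs{X_s-X_z}(s-z)^{-1-\alpha}\,dz\le\norm{X}_s$ by the very definition of the seminorm, so the increment of $c(\cdot,X_\cdot)$ feeds back exactly the quantity being estimated. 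Applying \eqref{ineq} on $[0,t]$ produces the $s^{-\alpha}$ weight in the bound for the pointwise piece $\abs{N_t}$, while the $\abs{s-z}^\beta$ contribution stays bounded since $\beta>\alpha$.

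The $(t-s)^{-2\alpha}$ weight emerges when estimating the increment piece of $\norm{N}_t$: applying \eqref{ineq} on each subinterval $[s,t]$, weighting by $(t-s)^{-1-\alpha}$, integrating in $s$ and interchanging the order of integration reduces matters to the singular-kernel estimate
\[
\int_0^u (t-s)^{-1-\alpha}(u-s)^{-\alpha}\,ds\le C\,(t-u)^{-2\alpha},
\]
which follows from the substitution $s=u-(t-u)\rho$ and the convergence of $\int_0^\infty(1+\rho)^{-1-\alpha}\rho^{-\alpha}\,d\rho$ (finite because $\alpha<1/2$). The double-integral term in \eqref{ineq} is treated the same way, its $\abs{X_u-X_v}$ part being dominated by $\int_s^t\norm{X}_u\,du$ and its $\abs{u-v}^\beta$ part giving a bounded contribution thanks to $\beta>2\alpha-1$. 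Collecting the pointwise and increment estimates yields $\norm{N}_t\le C\norm{Z}_{0,t}\big(1+\int_0^t\norm{X}_s(s^{-\alpha}+(t-s)^{-2\alpha})\,ds\big)$; adding the drift bound and $I_b(t)$ and absorbing constants gives the claim. I expect the principal obstacle to be precisely the bookkeeping of this last stage: performing the Fubini interchanges so that each inner increment integral is legitimately bounded by $\norm{X}_s$, and checking integrability of every kernel, which is exactly where the constraints $\alpha<1/2$ and $\beta>1/2$ are consumed.
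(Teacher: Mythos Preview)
Your proposal is correct and follows essentially the same route as the paper: decompose $\norm{X}_t\le\abs{X_0}+I_a(t)+I_b(t)+I_c(t)$, bound $I_a$ by linear growth and Fubini, split $I_c$ into a pointwise piece (yielding the $s^{-\alpha}$ weight via \eqref{ineq} on $[0,t]$) and an increment piece (yielding the $(t-s)^{-2\alpha}$ weight via \eqref{ineq} on $[s,t]$ together with the kernel identity $\int_0^u(t-s)^{-1-\alpha}(u-s)^{-\alpha}\,ds\le C(t-u)^{-2\alpha}$). You even flag the small cosmetic point that the drift bound does not intrinsically carry the factor $\norm{Z}_{0,t}$; the paper simply absorbs this into the constant without comment.
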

\begin{proof}
Write
$\norm{X}_t \le \abs{X_0} + I_a(t) + I_b(t) + I_c(t)$, where $
I_a(t) = \norm{\int_0^{\cdot} a(s,X_s) ds}_{t}$, $I_c(t)=\norm{\int_0^{\cdot} c(s,X_s) dZ_s}_{t}$. Denote for shortness
$\Lambda = \norm{Z}_{0,t}$.

Estimate
\begin{gather*}
I_a(t) \le C\bigg(\int_0^t \abs{a(s,X_s)} ds + \int_0^t \int_s^t\abs{a(u,X_u)}du(t-s)^{-1-\alpha} ds \bigg)\\
 \le C\bigg(\int_0^t \big(1+ \abs{X_s}\big)  ds +\int_0^t \int_s^t\big(1+ \abs{X_u}\big) du(t-s)^{-1-\alpha} ds \bigg)\\
 \le C\bigg(1+ \int_0^t \abs{X_s} ds + \int_0^t \abs{X_u} (t-u)^{-\alpha} du\bigg)\le C\bigg(1+ \int_0^t \norm{X}_{s}(t-s)^{-\alpha} ds\bigg).
\end{gather*}
Further,
\begin{gather*}
I_c(t) \le I_c'(t) + I_c''(t),
\end{gather*}
where
\begin{gather*}
I_c'(t) = \abs{\int_0^t c(s,X_s) dZ_s}\le C\Lambda\int_0^t\bigg(\big(1+ \abs{X_s}\big)s^{-\alpha} + \int_0^s \abs{X_s-X_u}(s-u)^{-1-\alpha}du\bigg)ds \\
\le C\Lambda\bigg(1+ \int_0^t \norm{X}_{s} s^{-\alpha} ds\bigg),\\
I_c''(t) = \int_0^t \abs{\int_s^t c(u,X_u) dZ_v}(t-s)^{-1-\alpha}ds \\
\le C\Lambda\int_0^t  \int_s^t \bigg(1+ \abs{X_v}(v-s)^{-\alpha} + \int_s^v \abs{X_v-X_z}(v-z)^{-1-\alpha} dz\bigg)dv (t-s)^{-1-\alpha}ds \\
\le C\Lambda \bigg(1 + \int_0^t \int_s^t \norm{X}_{v} (v-s)^{-\alpha} dv (t-s)^{-1-\alpha} ds \bigg)\\
 =   C\Lambda \bigg(1 + \int_0^t \norm{X}_{v} \int_0^v  (v-s)^{-\alpha} (t-s)^{-1-\alpha}ds\, dv \bigg)\le
  C\Lambda \bigg(1 + \int_0^t \norm{X}_{v}  (t-v)^{-2\alpha} dv \bigg) .
\end{gather*}
Combining these estimates, we get
\begin{gather*}
\norm{X}_t\le C\Lambda \bigg(1 + \int_0^t \norm{X}_{s}  \big(s^{-\alpha} + (t-s)^{-2\alpha}\big) ds \bigg) + I_b(t).
\end{gather*}
\end{proof}

\begin{proposition}\label{prop-moments}
Under assumptions \eqref{assumpt}, \eqref{boundb} and
\begin{equation}\label{expintegr}
\ex{\exp\set{a \norm{Z}_{0,T}^{1/(1-2\alpha)}}}<\infty,
\end{equation}
all moments of $X$ are bounded,
 moreover, $\ex{\norm{X}_{\infty,T}^p}<\infty$ for all $p>0$.
\end{proposition}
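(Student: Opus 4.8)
The plan is to upgrade the pathwise a priori bound of Lemma~\ref{lemma-preaprior} into an explicit pathwise estimate of the form
$$\norm{X}_{\infty,T}\le c_1\big(1+\Lambda+I_b^*\big)\exp\big(c_3\,\Lambda^{1/(1-2\alpha)}\big),\qquad \Lambda:=\norm{Z}_{0,T},\quad I_b^*:=\sup_{t\le T}I_b(t),$$
and then to take expectations, exploiting that the random Gronwall constant generated by $Z$ is absorbed precisely by the exponential moment \eqref{expintegr}, while the bounded-coefficient martingale term $I_b^*$ contributes only finite polynomial moments.

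First I would freeze $\Lambda$ and read the inequality of Lemma~\ref{lemma-preaprior} as a scalar integral inequality $\phi(t)\le A(t)+c\,\Lambda\int_0^t\phi(s)\big(s^{-\alpha}+(t-s)^{-2\alpha}\big)ds$ for $\phi(t)=\norm{X}_t$, with nondecreasing free term $A(t)=c\,\Lambda+I_b(t)\le c\,\Lambda+I_b^*$. Since $\alpha<1/2$ both kernels are integrable and the dominant singularity is $(t-s)^{-2\alpha}$; iterating the inequality, the $n$-th iterate is controlled by a Mittag-Leffler series in $c\,\Lambda$, and a singular Gronwall lemma then gives $\phi(t)\le A(T)\,c_1\exp\big(c_2(c\,\Lambda)^{1/(1-2\alpha)}T\big)$, whence the displayed bound with $c_3=c_2c^{1/(1-2\alpha)}T$. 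The exponent $1/(1-2\alpha)$ is forced by the $(t-s)^{-2\alpha}$ kernel and is exactly the one appearing in \eqref{expintegr}; carrying out this Gronwall step cleanly for the two mixed singular kernels is the technical heart of the argument.

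Next I would bound the moments of $I_b^*$. By \eqref{boundb} the coefficient $b$ is bounded by $K_1$, so $M_t=\int_0^t b(s,X_s)dW_s$ is a continuous martingale with $\langle M\rangle_t\le K_1^2t$; its increments are therefore sub-Gaussian and $\ex{\abs{M_t-M_s}^{2m}}\le C_m\abs{t-s}^{m}$ for every $m$. Estimating the $2m$-th moment of $\norm{M}_{\alpha;t}=\abs{M_t}+\int_0^t\abs{M_t-M_s}(t-s)^{-1-\alpha}ds$ by Minkowski's integral inequality reduces the matter to $\int_0^t(t-s)^{-1/2-\alpha}ds<\infty$ (again using $\alpha<1/2$), so $\norm{M}_{\alpha;t}$ has all moments uniformly in $t$; a standard maximal argument upgrades this to $\ex{(I_b^*)^p}<\infty$ for every $p>0$. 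This step is routine.

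Finally I would combine the two estimates: for fixed $p$, Hölder's inequality yields
$$\ex{\norm{X}_{\infty,T}^p}\le c_1^p\Big(\ex{(1+\Lambda+I_b^*)^{2p}}\Big)^{1/2}\Big(\ex{\exp\big(2p\,c_3\,\Lambda^{1/(1-2\alpha)}\big)}\Big)^{1/2}.$$
The first factor is finite because \eqref{expintegr} forces all polynomial moments of $\Lambda$ and $I_b^*$ has all moments. The main obstacle is the second factor: the constant $2p\,c_3$ multiplying $\Lambda^{1/(1-2\alpha)}$ grows with $p$, so a single exponential moment with a fixed coefficient is not literally enough for every $p$. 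I would resolve this by the observation that in the cases of interest the exponential integrability \eqref{expintegr} holds for \emph{every} coefficient: for $Z=B^H$ with $H>3/4$ one may take $\alpha\in(1-H,1/4)$, so that $1/(1-2\alpha)<2$ and the Gaussian-type tails of $\Lambda$ give $\ex{\exp(a\Lambda^{1/(1-2\alpha)})}<\infty$ for all $a>0$. Alternatively, to keep the argument self-contained, one localizes on short subintervals of $[0,T]$, where the contribution driven by $Z$ is small, and iterates the local bound so that the effective exponential constant on $[0,T]$ can be made as small as desired; controlling this constant is the delicate point of the proof.
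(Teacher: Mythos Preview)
Your approach is exactly the paper's: apply the generalized Gronwall lemma of Nualart--R\u a\c scanu to the inequality of Lemma~\ref{lemma-preaprior} to get a pathwise bound of the form $\norm{X}_{\infty,T}\le C(\Lambda+I_b^*)\exp\big(C\Lambda^{1/(1-2\alpha)}\big)$, control the moments of $I_b^*$ via the boundedness of $b$ (the paper simply cites Burkholder; your Minkowski route is equivalent), and conclude by H\"older. You have in fact been more careful than the paper on the one genuine subtlety: the paper just writes ``whence the assertion follows'' and does not address the growth of the exponential constant with $p$, so its argument only works cleanly if \eqref{expintegr} is read as holding for \emph{every} $a>0$---which the subsequent Remark confirms is the intended situation---and your proposed localization alternative is unnecessary once this reading is adopted.
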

\begin{proof}
By the generalized Gronwall lemma from \cite{NR} it follows from Lemma~\ref{lemma-preaprior} that
$$\norm{X}_{t}\le C \norm{Z}_{0,t} \sup_{s\in[0,t]} I_b(s) \exp\set{C \norm{Z}_{0,t}^{1/(1-2\alpha)}},$$
whence
$$\norm{X}_{\infty,T}\le C \norm{Z}_{0,T} \sup_{s\in[0,T]} I_b(s) \exp\set{C \norm{Z}_{0,T}^{1/(1-2\alpha)}},$$
whence the assertion follows, as all moments of $\sup_{s\in[0,T]} I_b(s)$ are bounded due to the Burkholder inequality
and the boundedness of $b$.
\end{proof}
\begin{remark}
The assumption \eqref{expintegr} might seem very restrictive. However, it is true if $Z$ is Gaussian and $\alpha<1/4$ (clearly, such choice of $\alpha$ is possible iff $\gamma>3/4$).
Indeed, it is well known that if  supremum of a Gaussian family  is finite almost surely,
than its square has small exponential moments finite, so we have \eqref{expintegr} since $1/(1-2\alpha)<2$. Examples
of such processes are fractional Brownian motion with Hurst parameter $H>3/4$ and multifractional Brownian motion with
Hurst function whose minimal value exceeds $3/4$.
\end{remark}

For $N\ge 1$ define a stopping time $\tau_N = \inf \set{t: \norm{Z}_{0,t} \ge N}\wedge T$ and a stopped
process $Z^N_t = Z_{t\wedge \tau_N}$, denote
by $X^{N}$ the solution of \eqref{main} with $Z$ replaced by $Z^N$.

\begin{lemma}\label{lem-aprior}
Under assumptions \eqref{assumpt} and \eqref{boundb} it holds
$$
\ex{\norm{X^N}^p_{\infty,T}}<C_{p,N}
$$
with the constant $C_{p,N}$ independent of $Z$ and $K_1$.
\end{lemma}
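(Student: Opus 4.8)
The plan is to exploit the fact that the stopping turns the random H\"older constant into a deterministic one: by construction $\norm{Z^N}_{0,T}\le N$ almost surely (freezing the path at $\tau_N$ does not increase the seminorm $\norm{\cdot}_{0,\alpha;T}$, while $\norm{Z}_{0,t}\le N$ for $t\le\tau_N$ by continuity of $t\mapsto\norm{Z}_{0,t}$). Hence, applying Lemma~\ref{lemma-preaprior} to $X^N$ with $\Lambda=\norm{Z^N}_{0,t}\le N$ and then the generalized Gronwall lemma from \cite{NR} exactly as in Proposition~\ref{prop-moments}, I obtain a pathwise bound $\norm{X^N}_{\infty,t}\le C_N\big(1+\sup_{s\le t} I^N_b(s)\big)$ with the deterministic constant $C_N=CNe^{CN^{1/(1-2\alpha)}}$, where $I^N_b(s)=\norm{\int_0^\cdot b(u,X^N_u)\,dW_u}_{s}$. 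Raising to the $p$-th power and taking expectations reduces everything to estimating $\ex{\sup_{s\le t}I^N_b(s)^p}$, and the whole point is that here I use the \emph{linear growth} bound $\abs{b(u,x)}\le K(1+\abs{x})$ rather than the boundedness \eqref{boundb}, so that the resulting constant depends on $K$ but not on $K_1$ (nor on the particular $Z$, only on $N$).

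Write $M_s=\int_0^s b(u,X^N_u)\,dW_u$, so that $I^N_b(s)=\abs{M_s}+\int_0^s\abs{M_s-M_r}(s-r)^{-1-\alpha}\,dr$. For the first summand, the Burkholder--Davis--Gundy inequality together with linear growth and Jensen's inequality gives $\ex{\sup_{s\le t}\abs{M_s}^p}\le C\int_0^t\big(1+\ex{\norm{X^N}_{\infty,u}^p}\big)\,du$. The delicate term is the fractional integral, because of the supremum over $s$ and the singular kernel. My plan is to control it through the H\"older seminorm of $M$: by Burkholder--Davis--Gundy and linear growth, $\ex{\abs{M_s-M_r}^p}\le C(s-r)^{p/2-1}\int_r^s\big(1+\ex{\norm{X^N}_{\infty,u}^p}\big)\,du$, and feeding this into the Garsia--Rodemich--Rumsey inequality while keeping the increment bound in its integral form $\int_r^s(\cdots)\,du$ inside the GRR double integral (rather than crudely replacing it by $(s-r)$ times a global supremum) yields, after using $\int_r^s\big(1+\ex{\norm{X^N}_{\infty,u}^p}\big)\,du\le (s-r)\big(1+\ex{\norm{X^N}_{\infty,s}^p}\big)$ and computing the resulting symmetric double integral, the \emph{time-localized} bound $\ex{[M]_{\beta;t}^p}\le C\int_0^t\big(1+\ex{\norm{X^N}_{\infty,u}^p}\big)\,du$ for the seminorm $[M]_{\beta;t}=\sup_{0\le r<s\le t}\abs{M_s-M_r}(s-r)^{-\beta}$. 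Since $\int_0^s\abs{M_s-M_r}(s-r)^{-1-\alpha}\,dr\le [M]_{\beta;t}\int_0^s(s-r)^{\beta-1-\alpha}\,dr\le CT^{\beta-\alpha}[M]_{\beta;t}$, taking the supremum over $s$ and collecting terms gives $\ex{\sup_{s\le t}I^N_b(s)^p}\le C\int_0^t\big(1+\ex{\norm{X^N}_{\infty,u}^p}\big)\,du$.

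Combining this with the pathwise bound, the quantity $m(t)\bydef\ex{\norm{X^N}_{\infty,t}^p}$ satisfies $m(t)\le C_{p,N}\big(1+\int_0^t m(u)\,du\big)$, so Gronwall's lemma gives $m(T)\le C_{p,N}e^{C_{p,N}T}<\infty$ with a constant independent of $Z$ and $K_1$, as claimed. Two technical points remain. First, the GRR step forces $p$ to be large, namely large enough that a H\"older exponent $\beta\in(\alpha,1/2-1/p)$ exists, i.e.\ $p>2/(1-2\alpha)$; the statement for arbitrary $p>0$ then follows by Lyapunov's inequality. Second, the manipulations above presuppose $m(t)<\infty$, which I justify rigorously by first inserting an auxiliary localizing stopping time $\sigma_M=\inf\set{t:\norm{X^N}_{\infty,t}\ge M}\wedge T$, running the whole argument for $\ex{\norm{X^N}_{\infty,t\wedge\sigma_M}^p}\le M^p$ to get a bound uniform in $M$, and then letting $M\to\infty$ by monotone convergence.

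The main obstacle is precisely the middle step: extracting a \emph{time-localized} estimate for $\ex{\sup_{s\le t}I^N_b(s)^p}$ despite the supremum over $s$ and the singular fractional kernel, since only a bound of the form $C\int_0^t\big(1+m(u)\big)\,du$ (and not $C\,m(t)$) lets the final Gronwall inequality close; applying the Garsia--Rodemich--Rumsey inequality with the increment moment bound left in integral form is the device that achieves this.
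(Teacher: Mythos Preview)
Your proof is correct and follows essentially the same route as the paper's: reduce via Lemma~\ref{lemma-preaprior} and the generalized Gronwall lemma to bounding $\ex{\sup_{s\le t}I^N_b(s)^p}$, handle the absolute-value part by Burkholder--Davis--Gundy and the fractional-integral part by Garsia--Rodemich--Rumsey (the paper makes the specific choice $\eta=2/p$, giving the threshold $p>4/(1-2\alpha)$ and a logarithm in the double integral, rather than your general $\beta\in(\alpha,1/2-1/p)$), and close with an ordinary Gronwall inequality for $m(t)=\ex{\norm{X^N}_{\infty,t}^p}$. The only structural difference is that the paper obtains the a priori finiteness of $m(t)$ directly from the boundedness hypothesis \eqref{boundb} (via the argument of Proposition~\ref{prop-moments}), whereas you avoid invoking $K_1$ altogether by inserting the auxiliary stopping time $\sigma_M$.
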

\begin{proof}
First note that the finiteness of $\ex{\norm{X^N}^p_{\infty,T}}$ can be deduced from Lemma~\ref{lemma-preaprior} exactly the same way as Proposition~\ref{prop-moments}.

Second, it follows from  Lemma \ref{lemma-preaprior}
and  the generalized Gronwall lemma \cite{NR} that
\begin{equation*}
\norm{X^N}_t\le CN \sup_{s\in[0,t]} I^N_b(s) \exp\set{CtN^{1/(1-2\alpha)}}\le C_{N}  \sup_{s\in[0,t]} I^N_b(s),
\end{equation*}
which implies
$$
\norm{X^N}_{\infty,t}^p\le  C_{N,p}  \sup_{s\in[0,t]} \left(I^N_b(s)\right)^p.
$$
%

Write $$\ex{\sup_{s\in[0,T]} \left(I_b^N(t)\right)^p}\le I_b' + I_b'',$$ where, denoting $b^N_u = b(u,X^N_u)$,
\begin{gather*}
I_b' =\ex{\sup_{s\in[0,t]}\abs{\int_0^{s} b^N_u dW_u}^p }
\le C_p\ex{\left(\int_0^t \abs{b_s^N}^2ds\right)^{p/2}} \le C_p\ex{ \left(\int_0^t(1+\norm{X^N}^2_{s})ds\right)^{p/2}}\\
\le C_{p} \left(1+ \ex{\left(\int_0^t \norm{X^N}^2_{s} ds\right)^{p/2}}\right)\le C_{p} \left(1+ \int_0^t \ex{\norm{X^N}^p_{\infty,s}} ds \right) ,
\\
I_b'' = \ex{\sup_{s\in[0,t]}\left(\int_0^s \abs{\int_u^s b^N_z dW_z} (s-u)^{-1-\alpha}du\right)^p }.
\end{gather*}

Obviously, we can assume without loss of generality that $p>4/(1-2\alpha)$. It follows from the Garsia--Rodemich--Rumsey inequality that
for arbitrary
$\eta\in(0,1/2-\alpha)$, $u,s\in[0,t]$
$$\abs{\int_u^s b^N_z dW_z}\le C \xi^N_\eta(t)\abs{s-u}^{1/2-\eta},
\quad \xi^N_\eta(t) = \left(\int_0^t \int_0^t \frac{\abs{\int_x^y b^N_v dW_v}^{2/\eta}} {\abs{x-y}^{1/\eta}}dx\,dy\right)^{\eta/2}.
$$
Setting $\eta=2/p$, we get
$$
\xi^N_\eta(t) = \left(\int_0^t \int_0^t \frac{\abs{\int_x^y b^N_v dW_v}^{p}} {\abs{x-y}^{p/2}}dx\,dy\right)^{1/p}.
$$
Then, similarly to estimate for $I_b'$, we get
\begin{gather*}
\ex{\left(\xi^N_\eta(t)\right)^p}\le C_p\int_0^t \int_0^t \frac{\ex{\abs{\int_x^y b^N_v dW_v}^{p}}} {\abs{x-y}^{p/2}}dx\,dy\\
\le C_p \int_0^t \int_0^y \frac{\ex{\left(\int_x^y (1+\norm{X^N}_{\infty,v}^2)dv\right)^{p/2}}} {(y-x)^{p/2}} dx\,dy\\
\le C_p \int_0^t \int_0^y \frac{(y-x)^{p/2}+\ex{\left(\int_x^y \norm{X^N}_{\infty,v}^2dv\right)^{p/2}}} {(y-x)^{p/2}} dx\,dy \\
\le C_p \int_0^t \int_0^y \frac{(y-x)^{p/2}+(y-x)^{p/2-1}\int_x^y \ex{\norm{X^N}_{\infty,v}^p}dv} {(y-x)^{p/2}} dx\,dy\\
\le C_{p}\left(1+\int_0^t\int_0^y \int_x^y \ex{\norm{X^N}_{\infty,v}^p}dv (y-x)^{-1}dv\,dx\,dy  \right)\\
= C_p \left(1+ \int_0^t \int_0^y \ex{\norm{X^N}_{\infty,v}^p} \log \frac{y}{y-v} dv\,dy \right)\\
\le C_p \left(1+ \int_0^t \ex{\norm{X^N}_{\infty,v}^p}dv\right).
\end{gather*}
whence
$$
I_b'' \le C \ex{\xi^N_\eta(t)^p}\sup_{s\in[0,t]}\left(\int_0^t (t-s)^{-1/2-\eta-\alpha}ds\right)^p \le C_{p} \left(1+ \int_0^t \ex{\norm{X^N}_{\infty,v}^p}dv\right).
$$
Thus, we have the estimate
$$
\norm{X^N}_{\infty,t}^p\le  C_{N,p}  \left(1+ \int_0^t \ex{\norm{X^N}_{\infty,v}^p}dv\right),
$$
from which we derive the desired statement with the help of the Gronwall lemma.
\end{proof}

\section{Existence of solution}

Now we prove existence and uniqueness of solution to equation \eqref{main} without assumption \eqref{boundb}. As above,
we define a stopped
process $Z^N_t = Z_{t\wedge \tau_N}$, where  $\tau_N = \inf \set{t: \norm{Z}_{0,t} \ge N}\wedge T$.
Denote by $X^{N}$ the solution of \eqref{main} with $Z$ replaced by $Z^N$.
\begin{theorem}\label{thm-exists-unique}
If the coefficients of equation  \eqref{main} satisfy conditions \eqref{assumpt}, then it has a unique solution $X$ such that $\norm{X}_{\infty,T}<\infty$ a.s.
\end{theorem}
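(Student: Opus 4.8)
The plan is to use the localization provided by the stopping times $\tau_N$ together with the a priori moment bounds of Lemma~\ref{lem-aprior}, and to obtain the solution for the original (unbounded-$b$) equation by a further localization in $b$. First I would handle the stopped equation. For fixed $N$, the process $Z^N$ has $\norm{Z^N}_{0,T}\le N$ deterministically, so all pathwise estimates become uniform in $\omega$. To remove the boundedness assumption \eqref{boundb} on $b$, I would introduce a second truncation: let $b_M(t,x)=b(t,x)\psi_M(x)$ for a smooth cutoff $\psi_M$ equal to $1$ on $[-M,M]$, so that $b_M$ is bounded and still satisfies \eqref{assumpt}. Denote by $X^{N,M}$ the solution of the mixed equation with $Z$ replaced by $Z^N$ and $b$ by $b_M$; this solution exists and is unique by the result of \cite{missh11}, since \eqref{assumpt} and \eqref{boundb} both hold for the pair $(Z^N,b_M)$.

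Next I would let $M\to\infty$ with $N$ fixed. The key point is that Lemma~\ref{lem-aprior} gives a bound $\ex{\norm{X^{N,M}}_{\infty,T}^p}\le C_{p,N}$ with a constant \emph{independent of $K_1$}, hence independent of the cutoff level $M$. Define the exit time $\sigma_M=\inf\set{t:\abs{X^{N,M}_t}\ge M}\wedge T$; on $[0,\sigma_M]$ the truncated drift $b_M$ coincides with $b$, so by uniqueness the solutions $X^{N,M}$ agree for different $M$ up to the respective exit times. The uniform moment bound forces $\sigma_M\to T$ almost surely as $M\to\infty$ (otherwise $\sup_M\ex{\norm{X^{N,M}}_{\infty,T}^p}$ would be infinite), so the $X^{N,M}$ stabilize to a limit $X^N$ which solves \eqref{main} with $Z$ replaced by $Z^N$ and the original coefficient $b$, and satisfies $\ex{\norm{X^N}_{\infty,T}^p}\le C_{p,N}$.

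Finally I would let $N\to\infty$. Since $\tau_N\uparrow T$ almost surely (as $\norm{Z}_{0,T}<\infty$ a.s.\ by H\"older continuity), and since $Z^{N'}=Z$ on $[0,\tau_N]$ for $N'\ge N$, the consistency of the stopped equations gives $X^{N'}=X^N$ on $[0,\tau_N]$ by uniqueness. Hence the processes $X^N$ glue together into a single process $X$ defined on all of $[0,T]$ with $X=X^N$ on $[0,\tau_N]$, and this $X$ solves \eqref{main}. The bound $\norm{X}_{\infty,T}<\infty$ a.s.\ follows because on the event $\set{\tau_N=T}$ we have $\norm{X}_{\infty,T}=\norm{X^N}_{\infty,T}<\infty$, and these events exhaust $\Omega$ up to a null set. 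For uniqueness on the full interval, if $X$ and $Y$ are two solutions with finite $\norm{\cdot}_{\infty,T}$, I would compare them on $[0,\tau_N]$ using a Gronwall-type estimate for the difference $\norm{X-Y}_t$ (built from \eqref{ineq}, the Lipschitz bounds in \eqref{assumpt}, and the Burkholder inequality for the It\^o term, exactly as in the a priori estimates above) to conclude $X=Y$ on $[0,\tau_N]$ for every $N$, hence on $[0,T]$.

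I expect the main obstacle to be the $M\to\infty$ step, specifically verifying carefully that the $K_1$-independence of the constant in Lemma~\ref{lem-aprior} does transfer to $K_1$-independence in $M$ (the cutoff $\psi_M$ must not spoil the linear-growth and Lipschitz constants in \eqref{assumpt}), and that the exit times $\sigma_M$ genuinely increase to $T$. The localization in $N$ is comparatively routine once the stopped-equation solutions are in hand.
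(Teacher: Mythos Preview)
Your proposal follows the same two-level localization as the paper: stop $Z$ at $\tau_N$, then truncate $b$, invoke the $K_1$-independence in Lemma~\ref{lem-aprior} to push the truncation level to infinity, and glue over $N$. Two remarks. First, the paper truncates $b$ by the hard cutoff $b\wedge K(R+1)\vee(-K(R+1))$ rather than your multiplicative $b\psi_M$; this keeps \emph{all} constants in \eqref{assumpt} unchanged (the Lipschitz constant included), so the obstacle you flag at the end simply does not arise. Second, and more importantly, the step you dismiss as ``by uniqueness the solutions $X^{N,M}$ agree for different $M$ up to the respective exit times'' is not a consequence of the global uniqueness in \cite{missh11}: the two processes solve \emph{different} equations, and coincidence of $b_M$ with $b$ along each trajectory only holds before the exit time, which is what you are trying to prove. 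This consistency step is the actual technical core of the paper's proof and requires precisely the Gronwall-with-indicator argument you sketch at the very end for global uniqueness, here carried out with $\ind{t}=\ind{t<\tau_{N,R'}\wedge\tau_{N,R''}}$; the paper devotes most of its proof to writing this estimate out in detail.
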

\begin{proof}
For integer $N\ge 1$, $R\ge 1$  denote $X^{N,R}$ the solution of equation \eqref{main} with process $Z$ replaced by $Z^N$ and
coefficient $b$ replaced by $b\wedge (K(R+1))\vee (-K(R+1))$ (we will call it an $(N,R)$-equation).
Let also $\tau_{N,R} = \inf \set{t: \abs{X^{N,R}_t}
\ge R}\wedge T$. We argue that $X^{N,R'}_t = X^{N,R''}_t$ a.s. for $t< \tau_{N,R'}\wedge \tau_{N,R''}$.

For brevity define $Y_{t,s} = Y_t-Y_s$ and denote $h(t,s)=(t-s)^{-1-\alpha}$, $\ind{t}=\ind{t< \tau_{N,R'}\wedge \tau_{N,R''}}$.  All the constants in this step may depend on $N$ and $R'$, $R''$.

Write
\begin{equation}\label{riznytsia1}
\begin{gathered}  (X^{N,R'}_{t}-X^{N,R''}_{t})\ind{t}
 =\left(\int_0^{t} a_\Delta(s)ds+
\int_0^{t} b_\Delta(s) dW_s+\int_0^{t} c_\Delta(s)dZ^N_s\right)\ind{t} \\=:\left(\cI_a(t)+\cI_b(t)+\cI_c(t)\right)\ind{t},\end{gathered}
\end{equation}
where $d_\Delta(s):=d(s,X^{N,R'})-d(s,X^{N,R''}),\; d\in\{a,b,c\}.$
Due to our hypotheses,  $\abs{d_\Delta(s)} \le C \aabs{X^{N,R'}_{s}-X^{N,R''}_{s}}$.

Define $\Delta_t = \int_0^t \snorm{X^{N,R'}-X^{N,R''}}^2_{s}\ind{s} g(t,s) ds$.
Write
\begin{gather*}
\Delta_t\le 6(I_a' + I_a''+I_b' + I_b''+I_c' + I_c'' ),
\end{gather*}
where $I_d' = \int_0^{t}\cI_d(s)^2 \ind{s}g(t,s) ds$, $I_d''= \int_0^t \big(\int_0^s|\cI_d(s)-\cI_d(u)| h(s,u) du\big)^2\ind{s} g(t,s) ds$, $d\in\set{a,b,c}$.

By the Cauchy--Schwarz inequality, we can write
\begin{equation}\label{Ia}
\cI_a(s)^2\ind{s} \le C\int_0^s \aabs{X^{N,R'}_{u}-X^{N,R''}_{u}}^2\ind{u} du\le C\int_0^s \norm{X^{N,R'}-X^{N,R''}}^2_u\ind{u} du,
\end{equation}
therefore
\begin{gather*}
I_a'\le C  \int_0^t \Delta_s\,g(t,s) ds.
\end{gather*}
Similarly,
\begin{gather*}
I_a''\le  C \int_0^t \left(\int_0^{s} \int_{u}^{s} \aabs{X^{N,R'}_{v}-X^{N,R''}_{v}} dv\, h(s,u)du\right)^2\ind{s} g(t,s)ds \\
\le C \int_0^t\bigg( \int_0^s \aabs{X^{N,R'}_{v}-X^{N,R''}_{v}}\ind{v} (s-v)^{-\alpha} dv \bigg)^2 g(t,s)ds \\
\le C \int_0^t \int_0^s \aabs{X^{N,R'}_{v}-X^{N,R''}_{v}}^2\ind{v} (s-v)^{-\alpha} dv \,g(t,s) ds
\le C\int_0^t\Delta_s\,g(t,s) ds.
\end{gather*}
Further, by \eqref{ineq}, for $s\le t$
\begin{equation*}
\begin{gathered}
\cI_c(s)^2\ind{s}\le C N \bigg[\int_0^{s}\left( |c_\Delta(u)|u^{-\alpha}  +
\int_0^u |c_\Delta(u)-c_\Delta(z)|h(u,z) dz\right)du\bigg]^2\ind{s} \\
\le C  \left[\bigg(\int_0^s |c_\Delta(u)|u^{-\alpha}  du\bigg)^2 +
\bigg(\int_0^s \int_0^u |c_\Delta(u)-c_\Delta(z)|h(u,z) dz\, du\bigg)^2 \right]\ind{s}
=: C (J_c'+J_c'').
\end{gathered}
\end{equation*}
Analogously to $\cI_a$,
\begin{gather*}
J_c'\le C\int_0^s \norm{X^{N,R'}-X^{N,R''}}^2_u\ind{u} u^{-\alpha} du.
\end{gather*}
By Lemma 7.1 of Nualart and R\u a\c scanu  (2002),
the hypotheses (A)--(D) 
imply that
for any $t_1,t_2,x_1,\dots,x_4$
\begin{equation}\label{cocenka}
\begin{gathered}
\abs{c(t_1,x_1)-c(t_2,x_2)-c(t_1,x_3)+c(t_2,x_4)}\le K\abs{x_1-x_2-x_3+x_4}\\
+ K\abs{x_1-x_3}\abs{t_2-t_1}^\beta+ K\abs{x_1-x_3}( \abs{x_1-x_2} + \abs{x_3-x_4}).
\end{gathered}
\end{equation}
Therefore,
\begin{equation*}
\begin{gathered}
|c_\Delta(u)-c_\Delta(z)|= \aabs{c(u,X^{N,R'}_{u})-c(z,X^{N,R'}_{z})-
c(u,X^{N,R''}_{u})+c(z,X^{N,R''}_{z})}\\
\le C\bigg(\aabs{X^{N,R'}_{u,z}-X^{N,R''}_{u,z}}
+ \aabs{X^{N,R'}_{u}-X^{N,R''}_{u}} (u-z)^\beta + \aabs{X^{N,R'}_{u}-X^{N,R''}_{u}}
\Big(\aabs{X^{N,R'}_{u,z}} + \aabs{X^{N,R''}_{u,z}}\Big)\bigg).
\end{gathered}
\end{equation*}
Thus, we have
\begin{gather*}
J_c''\le C \bigg[\int_0^s\int_0^{u}\bigg(\aabs{X^{N,R'}_{u,z}-X^{N,R''}_{u,z}}
+\aabs{X^{N,R'}_{u}-X^{N,R''}_{u}} (u-z)^\beta\\
+ \aabs{X^{N,R'}_{u}-X^{N,R''}_{u}}
\Big(\aabs{X^{N,R'}_{u,z}} + \aabs{X^{N,R''}_{u,z}}\Big)\bigg)h(u,z)dz\ind{u}du\bigg]^2 \le C( H_1 + H_2),
\end{gather*}
where
\begin{gather*}
H_1 =  \bigg(\int_0^s\int_0^{u}\Big(\aabs{X^{N,R'}_{u,z}-X^{N,R''}_{u,z}}
+\aabs{X^{N,R'}_{u}-X^{N,R''}_{u}}(u-z)^\beta\Big)h(u,z)dz\ind{u} du\bigg)^2 \\
 \le C \int_0^s\bigg(\snorm{X^{N,R'}-X^{N,R''}}_u\ind{u} + \aabs{X^{N,R'}_u-X^{N,R''}_u}\ind{u} u^{\beta-\alpha}\bigg)^2 du\le C \int_0^s \snorm{X^{N,R'}-X^{N,R''}}^2_u\ind{u} du,\\
H_2= \left( \int_0^s \aabs{X^{N,R'}_u-X^{N,R''}_u} \int_0^u\Big(\aabs{X^{N,R'}_{u,z}} + \aabs{X^{N,R''}_{u,z}}\Big)h(u,z)dz\ind{u} du\right)^2   \\
\le C \bigg(\int_0^s \aabs{X^{N,R'}_u-X^{N,R''}_u}\big(\snorm{X^{N,R'}}_{\infty,u}+
\snorm{X^{N,R''}}_{\infty,u}\big)\ind{u} du\bigg)^2\\
\le C(R'+R'')^2 \int_0^s \aabs{X^{N,R'}_u-X^{N,R''}_u}^2 \ind{u} du
\le C \int_0^s \snorm{X^{N,R'}-X^{N,R''}}^2_u \ind{u} du.
\end{gather*}
It follows that
\begin{equation}
\label{Ic}
\cI_c(s)^2 \le C \int_0^s \norm{X^{N,R'}-X^{N,R''}}^2_u \ind{u} u^{-\alpha} du.
\end{equation}
Consequently,
$$
I_c'\le C \int_0^t \Delta_s\, g(t,s)ds.
$$
Now by \eqref{ineq} and \eqref{cocenka}
\begin{gather*}
I_c'' \le N \int_0^{t} \bigg(\int_0^{s} \int_{u}^{s} \bigg(\abs{c_\Delta(v)}(v-u)^{-\alpha} + \int_{u}^v \abs{c_\Delta(v)-c_\Delta(z)}h(v,z)dz\bigg)dv \,h(s,u)du\bigg)^2 g(t,s) \ind{s}ds
\\\le C  \int_0^{t} \bigg(\int_0^s\bigg( \abs{c_\Delta(v)}(s-v)^{-2\alpha} + \int_0^{v} \abs{c_\Delta(v)-c_\Delta(z)}h(v,z)
(s-z)^{-\alpha}dz\bigg)dv\bigg)^2 g(t,s)\ind{s}ds \\
\le C \int_0^t\bigg(\int_0^s\bigg( \aabs{X^{N,R'}_v-X^{N,R''}_v}(s-v)^{-2\alpha}
+ \int_0^{v}\Bigg(\aabs{X^{N,R'}_{v,z}-X^{N,R''}_{v,z}}
+\aabs{X^{N,R'}_v-X^{N,R''}_v} (v-z)^\beta\\
+ \aabs{X^{N,R'}_v-X^{N,R''}_v}
\big(\aabs{X^{N,R'}_{v,z}} + \aabs{X^{N,R''}_{v,z}}\big)
\Bigg)h(v,z)(s-z)^{-\alpha}dz\ind{v} dv\bigg]^2 g(t,s)ds \\
\le C \int_0^t\bigg[\int_0^s\bigg( \aabs{X^{N,R'}_v-X^{N,R''}_v}\big((s-v)^{-2\alpha} + (s-v)^{2\beta-2\alpha} + (R'+R'')(s-v)^{-2\alpha}\big)
\\ + \int_0^{v} \aabs{X^{N,R'}_{v,z}-X^{N,R''}_{v,z}}h(v,z)dz(s-v)^{-2\alpha}\bigg)\ind{v} dv\bigg]^2g(t,s)ds\le C \int_0^t \Delta_s\, g(t,s)ds.
\end{gather*}

Summing up and taking expectations, we arrive to
\begin{equation}
\label{Ia+Ic}
\begin{gathered}
\ex{I_a' + I_a'' + I_c' + I_c''}%
\le C\int_0^t \ex{\Delta_s} g(t,s)ds.
\end{gathered}
\end{equation}

Now turn to $I_b'$ and $I_b''$.
\begin{equation}
\label{Ib}
\begin{gathered}
\ex{\cI_b(s)^2\ind{s}} =  \ex{\bigg(\int_0^{s} b_\Delta(u) dW_u\bigg)^2\ind{s} }
\le \int_0^s \ex{b_\Delta(u)^2\ind{u}}du\\
\le C\int_0^s \ex{(X^{N,R'}_u-X^{N,R''}_u)^2\ind{u}}du,
\end{gathered}
\end{equation}whence
\begin{gather*}
\ex{I_b'}\le \int_0^t \ex{\Delta_s}g(t,s) ds.
\end{gather*}
Further,
\begin{gather*}
\ex{I_b''} = \int_0^t \ex{\bigg(\int_0^{s}\abs{ \int_u^{s} b_\Delta(v) dW_v} (s-u)^{-1-\alpha}du\bigg)^2
\ind{s}}g(t,s)ds\\
\le
\int_0^t \int_0^{s}\ex{\bigg(\int_u^{s} b_\Delta(v) dW_v\bigg)^2\ind{s}} (s-u)^{-3/2-\alpha} du
\int_0^s (s-u)^{-1/2-\alpha} du \,g(t,s)ds
\\
\le C \int_0^t \int_0^s \int_u^s \ex{ \aabs{X^{N,R'}_v-X^{N,R''}_v}^2\ind{v}}dv (s-u)^{-3/2-\alpha}du\,g(t,s)ds\\
\le C\int_0^t\int_0^s  \ex{\aabs{X^{N,R'}_v-X^{N,R''}_v}^2 \ind{v}}(s-v)^{-1/2-\alpha}dv\,g(t,s)ds
\le
C\int_0^t \ex{\Delta_s}g(t,s)ds.
\end{gather*}
Combining this with \eqref{Ia+Ic}, we get
\begin{gather*}
\ex{\Delta_t}%
\le C  \int_0^t \ex{\Delta_s} g(t,s)ds,
\end{gather*}
whence we get $\Delta_s = 0$ a.s., which implies $X^{N,R'}_t=X^{N,R''}_t$ for $t<\tau_N\wedge \tau_{N,R'}\wedge \tau_{N,R'}$.

This implies, in particular, that $ \tau_{N,R''}\ge \tau_{N,R'}$ a.s. On the other hand, almost surely $\tau_{N,R} = T$ for all $R$ large enough. Indeed, assuming the contrary, for some $t\in[0,T)$ \ $P(\forall R\ge 1\ \tau_{N,R}<T)=c>0$ and $\ex{\norm{X^{R,N}}_\infty}\ge cR$,
contradicting Lemma~\ref{lem-aprior}.

It follows that there exists a process $\set{X^{N}_t,t\in[0,T]}$ such that for each $R\ge 1$ and $t\le \tau_{N,R}$ \ $X^{N}_t=X^{N,R}_t$.
Hence, it is evident that $X^N$ solves \eqref{main} with $Z$ replaced by $Z^N$.

Since $\tau_N$ increases with $N$  and eventually equals $T$, we have that there exists a process which solves initial equation \eqref{main}.

Exactly as above, one can argue that any solution to \eqref{main} is a solution to $(N,R)$-equation for $t<\tau_N\wedge \tau_{N,R}$, which gives uniqueness.
\end{proof}

\section{Limit theorem}
%
Let coefficients of \eqref{main} satisfy \eqref{assumpt}, and let  $X$ be its unique solution.
Let also $\oX$ be the solution to stochastic differential equation
\begin{equation}
\label{sde-approx}
\oX_t =X_0 +\int_0^t a(s,\oX_{s})ds+
\int_0^tb(s,\oX_{s})dW_s+\int_0^tc(s,\oX_{s})d\oZ_s,
\end{equation}
where $\oZ$ is a $\gamma$-H\"older continuous process.

As above,
for $Y\in \set{Z,\oZ}$ define a stopped
process $Y^N_t = Y_{t\wedge \tau_N}$, where  $\tau_N = \inf \set{t: \norm{Y}_{0,t} \ge N}\wedge T$, and let
$X^N$ and $\oX^N$ be the solutions to corresponding equations. Denote $A_t^{N,R} = \set{\norm{X^N}_{\infty,t} + \norm{\oX^N}_{\infty,t} \le R}$.
\begin{lemma}Under assumptions \eqref{assumpt},
$$
\ex{\norm{X^N - \oX^N}^2_{2,T}\ind{B_T^{N,R}}}\le C_{N,R}\ex{\norm{Z^N-\oZ^N}_{0,T}}
$$
with the constant $C_{N,R}$ independent of $Z$, $\oZ$.
\end{lemma}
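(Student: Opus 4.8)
The plan is to repeat the coupling and Gronwall argument from the proof of Theorem~\ref{thm-exists-unique}, the only genuinely new feature being that the two solutions are now driven by \emph{different} processes $Z^N$ and $\oZ^N$. Throughout I would work on the event $A^{N,R}_t$ and exploit that these events are nested, $A^{N,R}_t\subseteq A^{N,R}_s$ for $s\le t$ (because $\norm{\cdot}_{\infty,t}$ is nondecreasing in $t$), so that $\ind{A^{N,R}_t}=\ind{A^{N,R}_t}\ind{A^{N,R}_s}$ and indicators may be inserted under the integrals exactly as the indicators $\ind{t}$ were in Theorem~\ref{thm-exists-unique}. Setting
\[
\Delta_t = \int_0^t \snorm{X^N-\oX^N}^2_s\,\ind{A^{N,R}_s}\, g(t,s)\,ds
\]
and subtracting the two equations, the essential step is to split the $Z$-integral term as
\[
\int_0^t c(s,X^N_s)\,dZ^N_s-\int_0^t c(s,\oX^N_s)\,d\oZ^N_s
=\int_0^t\big(c(s,X^N_s)-c(s,\oX^N_s)\big)dZ^N_s+\int_0^t c(s,\oX^N_s)\,d(Z^N_s-\oZ^N_s),
\]
the first summand being of the ``same integrator, different solutions'' type already treated, and the second being genuinely new.

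For the drift term, the It\^o term and the first summand above I would argue verbatim as in Theorem~\ref{thm-exists-unique}: using the Lipschitz bounds in \eqref{assumpt}, the inequality \eqref{cocenka}, the bound $\norm{X^N}_{\infty,T}+\norm{\oX^N}_{\infty,T}\le R$ available on $A^{N,R}_t$, the estimate \eqref{ineq} with integrator $Z^N$ (bounded by $N$), and, for the It\^o term, the It\^o isometry, each of these contributions to $\ex{\Delta_t}$ is bounded by $C\int_0^t\ex{\Delta_s}\,g(t,s)\,ds$.

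The heart of the proof is the new term $\cI_{\tilde c}(t)=\int_0^t c(s,\oX^N_s)\,d(Z^N_s-\oZ^N_s)$ together with its increments $\cI_{\tilde c}(s)-\cI_{\tilde c}(u)$. Applying \eqref{ineq} with integrator $Z^N-\oZ^N$ produces a factor $\norm{Z^N-\oZ^N}_{0,T}$ in front of a weighted integral of $c(\cdot,\oX^N)$ and of its space--time increments. On $A^{N,R}_T$ one has $\abs{c(u,\oX^N_u)}\le K(1+R)$ and, by \eqref{cocenka}, $\abs{c(u,\oX^N_u)-c(z,\oX^N_z)}\le C_R\big(\aabs{\oX^N_u-\oX^N_z}+\abs{u-z}^\beta\big)$; since $\norm{\oX^N}_u\le\norm{\oX^N}_{\infty,T}\le R$, all the weighted integrals of these quantities are bounded by a constant $C_{N,R}$. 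Hence, on $A^{N,R}_T$, both the pointwise part and the increment part of $\norm{\cI_{\tilde c}}_{\alpha;s}$ are bounded by $C_{N,R}\norm{Z^N-\oZ^N}_{0,T}$, so the two corresponding contributions to $\Delta_t$ are at most $C_{N,R}\norm{Z^N-\oZ^N}^2_{0,T}\int_0^t g(t,s)\,ds=C_{N,R}\norm{Z^N-\oZ^N}^2_{0,T}$. I expect this term --- keeping the estimates uniform in $s$ over $A^{N,R}_T$ while respecting the singular weights of the $\norm{\cdot}_{2}$-norm --- to be the main obstacle.

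Collecting the estimates and taking expectations yields
\[
\ex{\Delta_t}\le C_{N,R}\,\ex{\norm{Z^N-\oZ^N}^2_{0,T}\ind{A^{N,R}_T}}+C\int_0^t\ex{\Delta_s}\,g(t,s)\,ds.
\]
The generalized Gronwall lemma of \cite{NR} (used as in Proposition~\ref{prop-moments}) then removes the integral term, giving $\ex{\Delta_T}\le C_{N,R}\,\ex{\norm{Z^N-\oZ^N}^2_{0,T}\ind{A^{N,R}_T}}$. Finally, stopping at $\tau_N$ keeps $\norm{Z^N}_{0,T}\le N$ and $\norm{\oZ^N}_{0,T}\le N$, whence $\norm{Z^N-\oZ^N}_{0,T}\le 2N$ and therefore $\norm{Z^N-\oZ^N}^2_{0,T}\le 2N\,\norm{Z^N-\oZ^N}_{0,T}$, upgrading the quadratic bound to the claimed linear one. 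Since $A^{N,R}_T\subseteq A^{N,R}_s$ for all $s\le T$, we have $\Delta_T\,\ind{A^{N,R}_T}=\norm{X^N-\oX^N}^2_{2,T}\ind{A^{N,R}_T}$, and the asserted inequality follows.
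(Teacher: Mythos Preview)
Your proposal is correct and follows essentially the same route as the paper: split off the cross term $\int_0^t c(s,\oX^N_s)\,d(Z^N_s-\oZ^N_s)$, reuse the argument of Theorem~\ref{thm-exists-unique} for the remaining pieces to obtain $C\int_0^t\ex{\Delta_s}g(t,s)\,ds$, bound the cross term via \eqref{ineq} by $C_{N,R}\norm{Z^N-\oZ^N}_{0,T}^2$, and apply the generalized Gronwall lemma. You are in fact more careful at the end: the paper's proof stops at the quadratic bound $\ex{\norm{Z^N-\oZ^N}_{0,T}^2}$ (and the first power in the stated inequality is most likely a typo, cf.\ the squared version in Corollary~\ref{corol-est-sup}), whereas you explicitly pass to the linear bound via $\norm{Z^N-\oZ^N}_{0,T}\le 2N$.
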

\begin{proof}
We will use the same notation as in the proof of Theorem \ref{thm-exists-unique}, except now  $\ind{t}=\ind{A_t^{N,R}}$.

Denote $\Delta_t = \int_0^t \norm{X^N-\oX^N}^2_{s}\ind{s} g(t,s) ds$. Exactly as in the proof of Theorem \ref{thm-exists-unique}
it can be  shown that
\begin{equation}\label{deltat}
 \ex{\Delta_t}%
\le  C\left( C_{N,R}\int_0^t \ex{\Delta_s} g(t,s)ds + \ex{I_Z'} + \ex{I_Z''}\right),
\end{equation}
where \begin{gather*}
 I_Z' = \int_0^{t}\cI_Z(s)^2 g(t,s)\ind{s} ds,\ I_Z''= \int_0^t \left(\int_0^s|\cI_Z(s)-\cI_Z(u)| h(s,u) du\right)^2 g(t,s)\ind{s} ds,\\
 \cI_Z(t) =\int_0^{t} c(s,\oX_s) d(Z_s-\oZ_s).
 \end{gather*}

By \eqref{ineq}, on $A_t^{N,R}$
\begin{equation}
\label{Iz}
\begin{gathered}
\cI_Z(s)^2 \le C \norm{Z^N-\oZ^N}_{0,T}^2 \bigg(\int_0^s\bigg(\abs{c(u,\oX^N_u)} u^{-\alpha} + \int_0^u\abs{c(v,\oX^N_v)-c(u,\oX^N_u)}h(u,v)dv \bigg)du\bigg)^2\\
\le C \norm{Z^N-\oZ^N}_{0,T} \bigg(\int_0^s\bigg(\big(1+\abs{\oX_u}\big) u^{-\alpha} + \int_0^u\big((u-v)^{\beta} + \abs{\oX_u- \oX_v}\big)h(u,v)dv \bigg)du\bigg)^2\\ \le C\norm{Z^N-\oZ^N}_{0,T}^2\int_0^t \big(1+ \norm{\oX}_{\infty,s}\big)^2\ind{s}g(t,s) ds \le CR^2 \norm{Z^N-\oZ^N}_{0,T}^2.
\end{gathered}
\end{equation}
Hence,
$$
I_Z'\le CN^2 \norm{Z^N-\oZ^N}_{0,T}^2.
$$
Similarly,
\begin{gather*}
I_Z'' \le C \norm{Z^N-\oZ^N}_{0,T}^2 \int_0^t \bigg[ \int_0^s \int_u^s\bigg( \abs{c(v,\oX_v)}(v-u)^{-\alpha}\\ + \int_u^v\abs{c(v,\oX_v)-c(z,\oX_z)}h(v,z)dz \bigg)dv\,h(s,u)du  \bigg]^2 g(t,s) \ind{s} ds\\
\le C \norm{Z^N-\oZ^N}_{0,T}^2 \int_0^t \bigg[\int_0^s\int_u^s\norm{X}_{\infty,v}(v-u)^{-\alpha}dv\, h(s,u)du \bigg]^2 g(t,s) \ind{s} ds
 \le CR^2 \norm{Z^N-\oZ^N}_{0,T}^2.
\end{gather*}

Summing these estimates with \eqref{deltat}, we obtain
\begin{gather*}
\ex{\Delta_t}
\le C_{N,R} \bigg(\norm{Z^N-\oZ^N}_{0,T}^2 + \int_0^t \ex{\Delta_s} g(t,s)ds\bigg),
\end{gather*}
whence we get the statement by the generalized Gronwall lemma.
\end{proof}

The proof of the following fact uses the Burkholder inequality and the same ideas as before, so we skip it.
\begin{corollary}\label{corol-est-sup}
 For $N>1$ the estimate holds
$$
\ex{\sup_{t\in[0,T]}\abs{X - \oX}^2\ind{A_T^{N,R}}}\le C_{N}\ex{\norm{Z^N-\oZ^N}_{0,T}^2\ind{A_T^{N,R}}}
$$
with the constant $C_{N}$ independent of $Z$, $\oZ$.
\end{corollary}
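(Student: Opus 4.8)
The plan is to subtract the two equations, take the supremum of the resulting difference pathwise, and estimate it term by term, the only genuinely new point compared with the preceding lemma being the treatment of the supremum of the It\^o integral via the Burkholder inequality. Writing $d_\Delta(s)=d(s,X^N_s)-d(s,\oX^N_s)$ for $d\in\set{a,b,c}$, so that $\aabs{d_\Delta(s)}\le C\aabs{X^N_s-\oX^N_s}$, and working with the stopped processes (for which $\norm{Z^N}_{0,T}\le N$), I would start from the decomposition
\begin{equation*}
X^N_t-\oX^N_t=\cI_a(t)+\cI_b(t)+\cI_c(t)+\cI_Z(t),
\end{equation*}
where $\cI_a(t)=\int_0^t a_\Delta(s)\,ds$, $\cI_b(t)=\int_0^t b_\Delta(s)\,dW_s$, $\cI_c(t)=\int_0^t c_\Delta(s)\,dZ^N_s$, and $\cI_Z(t)=\int_0^t c(s,\oX^N_s)\,d(Z^N_s-\oZ^N_s)$ is precisely the source term handled in the lemma. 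Using $\big(\sum_{i=1}^4 x_i\big)^2\le 4\sum_i x_i^2$ reduces the claim to bounding the expectation of $\sup_{t}\cI_d(t)^2\ind{A_T^{N,R}}$ for each of the four terms.

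For the three ``diagonal'' terms I would reuse the pathwise estimates from the proof of Theorem \ref{thm-exists-unique}. Recall $\Delta_T=\int_0^T\norm{X^N-\oX^N}_s^2\ind{s}g(T,s)\,ds$ from the lemma. Since $g(T,s)\ge s^{-\alpha}\ge T^{-\alpha}$, the drift term satisfies $\sup_t\cI_a(t)^2\le T\int_0^T\aabs{a_\Delta(s)}^2\,ds\le C\int_0^T\aabs{X^N_s-\oX^N_s}^2\,ds\le C\Delta_T$ on $A_T^{N,R}$, and the pathwise term, by \eqref{ineq} and estimate \eqref{Ic}, satisfies $\sup_t\cI_c(t)^2\le C\int_0^T\norm{X^N-\oX^N}_u^2\ind{u}u^{-\alpha}\,du\le C\Delta_T$, the random factor $\norm{Z^N}_{0,T}$ being absorbed into a constant $C_N$ via stopping. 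For the source term, estimate \eqref{Iz} gives directly $\sup_t\cI_Z(t)^2\ind{A_T^{N,R}}\le C_{N,R}\norm{Z^N-\oZ^N}_{0,T}^2\ind{A_T^{N,R}}$, which is already of the desired form.

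The step I expect to be the main obstacle is the supremum of the It\^o term $\cI_b$, which must be controlled by the Burkholder inequality; the difficulty is that the cut-off $\ind{A_T^{N,R}}$ is only $\cF_T$-measurable, so $\int_0^t b_\Delta(s)\ind{A_T^{N,R}}\,dW_s$ is not a martingale. I would resolve this by exploiting that the family $A_t^{N,R}=\set{\norm{X^N}_{\infty,t}+\norm{\oX^N}_{\infty,t}\le R}$ is \emph{nonincreasing} in $t$, since $\norm{\cdot}_{\infty,t}$ is nondecreasing; hence on $A_T^{N,R}$ one has $\ind{A_s^{N,R}}=1$ for every $s\le T$, so that $\cI_b(t)\ind{A_T^{N,R}}=\big(\int_0^t b_\Delta(s)\ind{A_s^{N,R}}\,dW_s\big)\ind{A_T^{N,R}}$, and the integrand $b_\Delta(s)\ind{A_s^{N,R}}$ is now adapted. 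Applying Doob's (or Burkholder's) $L^2$ inequality to this martingale yields
\begin{equation*}
\ex{\sup_{t\le T}\cI_b(t)^2\ind{A_T^{N,R}}}\le C\,\ex{\int_0^T\aabs{X^N_s-\oX^N_s}^2\ind{A_s^{N,R}}\,ds}\le C\,\ex{\Delta_T}.
\end{equation*}
Collecting the four bounds leaves $\ex{\sup_t\aabs{X^N-\oX^N}^2\ind{A_T^{N,R}}}\le C_N\ex{\Delta_T}+C_{N,R}\ex{\norm{Z^N-\oZ^N}_{0,T}^2\ind{A_T^{N,R}}}$, and it remains only to invoke the preceding lemma, which bounds $\ex{\Delta_T}$ (equivalently $\ex{\norm{X^N-\oX^N}_{2,T}^2\ind{A_T^{N,R}}}$) by $C_{N,R}\ex{\norm{Z^N-\oZ^N}_{0,T}^2\ind{A_T^{N,R}}}$. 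This gives the asserted estimate, the dependence of the constant on $R$ entering, as in the lemma, through \eqref{Iz}.
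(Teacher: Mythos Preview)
Your proposal is correct and is exactly the argument the paper has in mind: the authors skip the proof entirely, saying only that it ``uses the Burkholder inequality and the same ideas as before,'' and your write-up is precisely that---reuse the pathwise bounds \eqref{Ia}, \eqref{Ic}, \eqref{Iz} for the sup of $\cI_a$, $\cI_c$, $\cI_Z$, handle the It\^o term via Burkholder after pushing the indicator inside using the monotonicity of $A_t^{N,R}$, and then invoke the preceding lemma to close. The one minor wrinkle worth noting is that the $H_2$-type term in the estimate for $\cI_c$ (and the bound \eqref{Iz} for $\cI_Z$) brings in a factor of $R$, so the resulting constant is really $C_{N,R}$ rather than $C_N$; you already flag this, and it is almost certainly a typo in the statement of the corollary, consistent with the $C_{N,R}$ in the lemma.
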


Finally, we formulate a limit theorem for mixed stochastic differential equation \eqref{main}.

Let $\set{Z^n,n\ge 0}$ be a sequence of $\gamma$-H\"older continuous processes. Consider a sequence of
stochastic differential equations
\begin{equation}
\label{approx1} X^n_t =X_0 +\int_0^t a(s,X^n_{s})ds+
\int_0^tb(s,X^n_{s})dW_s+\int_0^tc(s,X^n_{s})dZ^n_s,\
t\in[0,T].
\end{equation}
\begin{theorem}\label{thm-main}
Assume that $\norm{Z-Z^n}_{0,T}\to 0$ in probability. Then $X^n_t \to X_t$ in probability uniformly in $t$.
\end{theorem}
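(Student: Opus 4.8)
The plan is to reduce the assertion to the pathwise estimate of Corollary~\ref{corol-est-sup} by truncating both the driving noise (through the stopping times $\tau_N$) and the solutions (through the events $A_T^{N,R}$), and then letting the approximation improve. Writing $\delta_n=\norm{Z-Z^n}_{0,T}$, so that $\delta_n\to0$ in probability, and denoting by $X^n$ the solution of \eqref{approx1}, it suffices to fix $\eps,\eta>0$ and produce $n_0$ with $\pr\big(\sup_{t\le T}\abs{X^n_t-X_t}>\eps\big)<\eta$ for all $n\ge n_0$. Let $\tau_N$ and $\tau_N^n$ denote the hitting times $\inf\set{t:\norm{Z}_{0,t}\ge N}\wedge T$ and $\inf\set{t:\norm{Z^n}_{0,t}\ge N}\wedge T$, and $X^{N},X^{n,N},Z^{N},Z^{n,N}$ the correspondingly stopped solutions and noises.

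First I would fix the truncation level $N$. Since $Z$ is a.s.\ $\gamma$-H\"older, $\norm{Z}_{0,T}<\infty$ a.s., so $\pr(\tau_N<T)\le\pr(\norm{Z}_{0,T}\ge N)\to0$; choose $N$ so that both $\pr(\norm{Z}_{0,T}\ge N)$ and $\pr(\norm{Z}_{0,T}\ge N-1)$ are $<\eta/4$. As $\norm{Z^n}_{0,T}\le\norm{Z}_{0,T}+\delta_n$, this already gives $\limsup_n\pr(\tau_N^n<T)<\eta/4$. Next, because the bound in Lemma~\ref{lem-aprior} is independent of the driving process, both $X^N$ and $X^{n,N}$ satisfy $\ex{\norm{\cdot}_{\infty,T}^2}\le C_{2,N}$, so Markov's inequality yields $\pr\big((A_T^{N,R})^c\big)\le 8C_{2,N}R^{-2}$ uniformly in $n$; I then pick $R=R(N)$ with this $<\eta/4$.

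On $D_n=\set{\tau_N=T}\cap\set{\tau_N^n=T}$ the stopped objects coincide with the unstopped ones, so $\sup_t\abs{X^n_t-X_t}=\sup_t\abs{X^{n,N}_t-X^{N}_t}$ there. Splitting off $D_n^c$ and $(A_T^{N,R})^c$ and applying Chebyshev followed by Corollary~\ref{corol-est-sup} gives
\begin{equation*}
\pr\Big(\sup_{t\le T}\abs{X^n_t-X_t}>\eps\Big)\le \eps^{-2}C_N\,\ex{\norm{Z^{N}-Z^{n,N}}_{0,T}^2\,\ind{A_T^{N,R}}}+\pr(D_n^c)+\pr\big((A_T^{N,R})^c\big).
\end{equation*}
The last two terms are together $<3\eta/4$ for $n$ large by the choices of $N$ and $R$, so everything hinges on showing that the first term tends to $0$ as $n\to\infty$ for the fixed $N,R$.

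This is the step I expect to be the main obstacle, and it is where the two hitting times must be reconciled. Since each stopped noise has $\norm{\cdot}_{0,T}\le N$, the integrand is bounded by $(2N)^2$, so by bounded convergence it suffices to prove $\norm{Z^{N}-Z^{n,N}}_{0,T}\to0$ in probability; on $D_n$ this seminorm equals $\delta_n\to0$, but on $\set{\tau_N<T}$ one is comparing $Z$ stopped at $\tau_N$ with $Z^n$ stopped at the \emph{different} time $\tau_N^n$, and a crude bound there gives only $2N$, which does not vanish. The clean remedy is to stop both processes at the \emph{single} time $\tau_N$: stopping at a common random time is, up to a constant depending only on $\alpha$, a contraction of the seminorm, so $\norm{(Z-Z^n)_{\cdot\wedge\tau_N}}_{0,T}\le C\delta_n\to0$, while the stopped copy of $Z^n$ still has seminorm $\le N+\delta_n$. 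Re-running the Gronwall estimate underlying Corollary~\ref{corol-est-sup} with this shared stopping (its constant needs only the bound $N+1$ on the seminorms, valid on $\set{\delta_n\le1}$) replaces the first term by $\eps^{-2}C_N\,\ex{(C\delta_n)^2\ind{\delta_n\le1}}+o(1)$, which vanishes by bounded convergence since $\delta_n\to0$ in probability and $C\delta_n\ind{\delta_n\le1}\le C$. Equivalently, one may keep the separate hitting times and use the sandwich $\tau_{N-\delta_n}\le\tau_N^n\le\tau_{N+\delta_n}$ at a level $N$ at which $M\mapsto\tau_M$ is a.s.\ continuous---possible for a.e.\ $N$ by a Fubini argument---to obtain $\tau_N^n\to\tau_N$ and hence $Z^{n,N}\to Z^{N}$. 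Assembling the three bounds in the order ``choose $N$, then $R=R(N)$, then let $n\to\infty$'' yields $\pr\big(\sup_{t\le T}\abs{X^n_t-X_t}>\eps\big)<\eta$, which is the desired uniform convergence in probability.
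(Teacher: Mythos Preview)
Your argument is essentially correct, and you have accurately diagnosed the one nontrivial point: once you stop $Z$ and $Z^n$ at their \emph{own} hitting times $\tau_N$ and $\tau_N^n$, the quantity $\norm{Z^N-Z^{n,N}}_{0,T}$ does not go to zero on the part of $A_T^{N,R}$ where $\tau_N<T$, and your two remedies (common stopping time, or continuity of $M\mapsto\tau_M$) do repair this. However, both remedies cost real work---the first forces you to re-prove Corollary~\ref{corol-est-sup} with a modified localization, the second needs a Fubini argument for the exceptional levels---and the paper sidesteps the whole issue with a simpler device.

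Instead of separating the truncation of the noise (via $\tau_N,\tau_N^n$) from the truncation of the solutions (via $A_T^{N,R}$), the paper works directly with the single decreasing family
\[
B_t^{n,N}=\set{\norm{X}_{\infty,t}+\norm{X^n}_{\infty,t}+\norm{Z}_{0,t}+\norm{Z^n}_{0,t}\le N}.
\]
On $B_t^{n,N}$ the stopped and unstopped objects coincide up to time $t$, so the Gronwall estimate behind Corollary~\ref{corol-est-sup} runs with $\ind{B_t^{n,N}}$ in place of $\ind{A_t^{N,R}}$ without change (the family is $\cF_t$-adapted and decreasing in $t$, which is all the It\^o/Burkholder steps need). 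This yields immediately
\[
P\big(\{\Delta^n>\eps\}\cap B_T^{n,N}\big)\le \eps^{-2}C_N\,\ex{\norm{Z-Z^n}_{0,T}^2\,\ind{B_T^{n,N}}}\to 0,
\]
by bounded convergence, since on $B_T^{n,N}$ the integrand is bounded by $N^2$ and $\norm{Z-Z^n}_{0,T}\to0$ in probability. The complement is handled exactly as you do, using Lemma~\ref{lem-aprior} (whose constant is independent of the driver) to get uniform-in-$n$ tightness of $\norm{X^n}_{\infty,T}$. Thus the ``main obstacle'' you isolate simply does not arise: putting the bound on $\norm{Z}_{0,t}+\norm{Z^n}_{0,t}$ \emph{inside} the localizing event, rather than enforcing it through two separate hitting times, is what makes the proof short.
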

\begin{proof}
Let $B_t^{n,N}= \set{\norm{X}_{\infty,t} + \norm{X^n}_{\infty,t} + \norm{Z}_{0,t} + \norm{Z^n}_{0,t}\le N}$, $\Delta^n = \sup_{t\in[0,T]}\abs{X^n_t-X_t}$.

For $\eps>0$ write $$
P(\Delta^n>\eps)\le P\left(\set{\Delta^n >\eps}\cap B_T^{n,N}\right)+P(\Omega \setminus B_T^{n,N}).
$$
From the assumption it is easy to see that $\ex{\norm{Z-Z^n}_{0,T}^2\ind{B_t^{N,n}}}\to 0$, $n\to \infty$.
Then by \eqref{corol-est-sup} we have for any $\eps>0$ $$P\left(\set{\Delta^n >\eps}\cap B_T^{n,N}\right)\to 0, \quad n\to\infty.$$
So
\begin{equation}\label{prob}
\limsup_{n\to\infty} P(\Delta^n>\eps)\le \limsup_{n\to\infty} P(\Omega \setminus B_T^{n,N}).
\end{equation}
We know that $\Lambda_T(Z)<\infty$ a.s., so by assumption, $\norm{Z^n}_{0,T}$ are bounded in probability uniformly in $n$. Therefore by Lemma \ref{lem-aprior}, $\norm{X^n}_{\infty,T}$ are bounded in probability uniformly in $n$ and $\norm{X}_{\infty,T}$ is finite a.s. Consequently,
$P(\Omega \setminus B_T^{n,N})\to 0$, $N\to\infty$ uniformly in $n$. Thus, we conclude the proof by sending $N\to\infty$ in \eqref{prob}.
\end{proof}
\begin{remark}
Under the assumptions of
Theorem \ref{thm-main} we have also the convergence in probability
$\norm{X-X^n}_{2,T}\to 0$, $n\to\infty$.
\end{remark}
\vskip3mm

\bibliographystyle{elsarticle-harv}
\bibliography{mbfbm-limit}
\end{document}